\theoremstyle{plain}
\newtheorem{lemma}{Lemma}[section]
\newtheorem{theorem}[lemma]{Theorem}
\newtheorem{prop}[lemma]{Proposition}
\newtheorem{exam}[lemma]{\normalfont \scshape
 Example}
\newtheorem{rem}[lemma]{\normalfont \scshape Remark}
\newcommand{\R}{\mathbb{R}}
\newcommand{\N}{\mathbb{N}}
\newcommand{\norm}[1]{\left\Vert#1\right\Vert}
\newcommand{\abs}[1]{\left\vert#1\right\vert}
\newcommand{\set}[1]{\left\{#1\right\}}
\newcommand{\eps}{\varepsilon}
\newcommand{\bfx}{\bm{x}}
\newcommand{\bfzero}{\bm{0}}
\newcommand{\bfy}{\bm{y}}
\newcommand{\bfZ}{\bm{Z}}
\newcommand{\bfeta}{\bm{\eta}}
\newcommand{\bftheta}{\bm{\vartheta}}
\DeclareMathOperator{\Var}{Var}
\DeclareMathOperator{\MSE}{MSE}
\DeclareMathOperator{\IMSE}{IMSE}
\begin{document}

\title[On generalized max-linear models in max-stable random fields]{On generalized max-linear models in max-stable random fields}%

\author{Michael Falk, Maximilian Zott}
\address{University of Wurzburg,
Institute of Mathematics,  Emil-Fischer-Str. 30, 97074 W\"{u}rzburg, Germany.}
\email{michael.falk@uni-wuerzburg.de, maximilian.zott@uni-wuerzburg.de}

\subjclass[2010]{Primary 60G70}%
\keywords{Multivariate extreme value distribution $\bullet$ max-stable random field $\bullet$ $D$-norm $\bullet$ max-linear model $\bullet$ stochastic interpolation}%

\begin{abstract}
In practice, it is not possible to observe a whole max-stable random field. Therefore, a way how to reconstruct a max-stable random field in $C\left([0,1]^k\right)$ by interpolating its realizations at finitely many points is proposed. The resulting interpolating process is again a max-stable random field. This approach uses a
\emph{generalized max-linear model}. Promising results have been established in the case $k=1$ in a previous paper. However, the extension to higher dimensions is not straightforward since we lose the natural order of the index space.
\end{abstract}

\maketitle
\section{Introduction and Preliminaries}

\citet{domeyri12} derive an algorithm to sample from the regular conditional distribution of a max-stable random field $\bfeta$, say, given the marginal observations $\eta_{s_1}=z_1,\dots,\eta_{s_k}=z_k$ for some $z_1,\dots,z_d$ from the state space and $k$ locations $s_1,\dots,s_d$. This, clearly, concerns the \emph{distribution} of $\bfeta$ and derived distributional parameters.

    Different to that, we try to \emph{reconstruct} $\bfeta$ from the observations $\eta_{s_1},\dots,\eta_{s_k}$. This is done by a \emph{generalized max-linear model} in such a way, that the interpolating process $\hat\bfeta$ is again a (standard) max-stable random field.

    As our approach is deterministic, once the  observations $\eta_{s_1}=z_1,\dots,\eta_{s_k}=z_k$ are given, a proper way to measure the performance of our approach is the \emph{mean squared error} (MSE). Convergence of the pointwise MSE as well as of the integrated MSE (IMSE) is established if the set of grid points $s_1,\dots,s_d$ gets dense in the index space.

A \emph{max-stable random process} with index set $T$ is a family of random variables $\bm\xi=(\xi_t)_{t\in T}$ with the property that there are functions $a_n:T\to\R^+_0$ and $b_n:T\to\R$, $n\in\N$, such that
\[
\left(\max_{i=1,\dotsc,n}\left(\frac{\xi^{(i)}_t-b_n(t)}{a_n(t)}\right)\right)_{t\in T}=_d\bm\xi,
\]
where $\bm\xi^{(i)}=(\xi^{(i)}_t)_{t\in T}$, $i=1,\dotsc,n$, are independent copies of $\bm\xi$ and '$=_d$' denotes equality in distribution. We get a max-stable random vector (rv) on $\R^d$ by putting $T=\{1,\dotsc,d\}$. Different to that, we obtain a max-stable process with continuous sample paths on some compact metric space $S$, if we set $T=S$ and require that the sample paths $\bm\xi(\omega):S\to\R$ realize in $C(S)=\{g\in\R^S:~g\text{ continuous}\}$, and that the norming functions $a_n,b_n$ are continuous as well. Max-stable random vectors, and processes, respectively, have been investigated intensely over the last decades. For detailed reviews of max-stable rv and processes, see for instance the monographies of \citet{beirgotese04}, \citet{dehaf06}, \citet{resn08}, \citet{fahure10} and \citet{davpari12} among others. Max-stable rv and processes are of enormous interest in extreme value theory since they are the only possible limit of linearly standardized maxima of independent and identically distributed rv or processes.

Clearly, the univariate margins of a max-stable random process are max-stable distributions on the real line. A max-stable random object $\bm\xi=(\xi_t)_{t\in T}$ is commonly called \emph{simple max-stable} in the literature if each univariate margin is unit Fr\'{e}chet distributed, i.\,e. $P(\xi_t\leq x)=\exp\left(-x^{-1}\right)$, $x>0$, $t\in T$. Different to that, we call a random process $\bm\eta=(\eta_t)_{t\in T}$ \emph{standard max-stable} if all univariate marginal distributions are standard negative exponential, i.\,e. $P(\eta_t\leq x)=\exp\left(x\right)$, $x\leq0$, $t\in T$. The transformation to simple/standard margins does not cause any problems, neither in the case of rv (see e.\,g. \citet{dehar77} or \citet{resn08}), nor in the case of rf with continuous sample paths (see e.\,g. \citet{ginhv90}).

It is well known (e.g. \citet{dehar77}, \citet{pick81}, \citet{fahure10}) that a rv $(\eta_1,\dotsc,\eta_d)$ is a \emph{standard max-stable rv} iff there exists a rv $(Z_1,\dotsc,Z_d)$ and some number $c\geq1$ with $Z_i\in[0,c]$ almost surely (a.\,s.) and $E(Z_i)=1$, $i=1,\dotsc,d$, such that for all $\bm x=(x_1,\dotsc,x_d)\leq\bm 0\in\R^d$
\[
P(\eta_1\leq x_1,\dotsc,\eta_d\leq x_d)=\exp\left(-\norm{\bm x}_D\right):=\exp\left(-E\left(\max_{i=1,\dotsc,d}\left(\abs{x_i}Z_i\right)\right)\right).
\]
The condition $Z_i\in[0,c]$ a.\,s. can be weakened to $P(Z_i\geq0)=1$. Note that $\norm\cdot_D$ defines a norm on $\R^{d}$, called \emph{$D$-norm}, with \emph{generator} $\bm Z$. The $D$ means dependence: We have independence of the margins of $\bm X$ iff $\norm\cdot_{D}$ equals the norm $\norm{\bm x}_1=\sum_{i=1}^d\abs{x_i}$, which is generated by $(Z_1,\dotsc,Z_d)$ being a random permutation of the vector $(d,0\dotsc,0)$. We have complete dependence of the margins of $\bm X$ iff $\norm\cdot_{D}$ is the maximum-norm $\norm{\bm x}_\infty=\max_{1\le i\le d}\abs{x_i}$, which is generated by the constant vector $(Z_1,\dotsc,Z_d)=(1,\dotsc,1)$. We refer to \citet[Section 4.4] {fahure10} for further details of  $D$-norms.

Let $S$ be a compact metric space. A standard max-stable process $\bm\eta=(\eta_t)_{t\in S}$ with sample paths in $\bar C^-(S):=\{g\in C(S):~g\leq 0\}$ is, in what follows, shortly called a \emph{standard max-stable process} (SMSP). Denote further by $E(S)$ the set of those bounded functions $f\in\R^S$ that have only a finite number of discontinuities and define $\bar E^-(S):=\{f\in E(S):~f\leq 0\}$. We know from \citet{ginhv90} that a process $\bm\eta=(\eta_t)_{t\in S}$ with sample paths in $C(S)$ is an SMSP iff there exists a stochastic process $\bm Z=(Z_t)_{t\in S}$ realizing in $\bar C^+(S):=\{g\in C(S):~g\geq 0\}$ and some $c\geq1$, such that $Z_t\leq c$ a.\,s., $E(Z_t)=1$, $t\in S$, and
\[
P(\bm\eta\leq f)=\exp\left(-\norm f_D\right):=\exp\left(-E\left(\sup_{t\in S}\left(\abs{f(t)}Z_t\right)\right)\right),\qquad f\in\bar E^-(S).
\]
Note that $\norm\cdot_D$ defines a norm on the function space $E(S)$, again called \emph{$D$-norm} with \emph{generator process} $\bm Z$. The functional $D$-norm is topologically equivalent to the sup-norm $\norm f_{\infty}=\sup_{t\in S}\abs{f(t)}$, which is itself a $D$-norm by putting $Z_t=1$, $t\in S$, see \citet{aulfaho11} for details.

At first it might seem unusual to consider the function space $E(S)$. The reason for that is that a suitable choice of the function $f\in\bar E^-(S)$ allows the incorporation of the finite dimensional marginal distributions by the relation $P(\bm\eta\leq f)=P(\eta_{t_i}\leq x_i,1\leq i\leq d)$.

The condition $P\left(\sup_{t\in S}Z_t\leq c\right)=1$ can be weakened to
\begin{equation}\label{eq:condition_generator}
E\left(\sup_{t\in S}Z_t\right)<\infty,
\end{equation}
see \citet[Corollary 9.4.5]{dehaf06}.

\section{Generalized max-linear models}\label{sec:model}

\subsection*{The model and some examples}

In this section we will approximate a given SMSP with sample paths in $\bar C^-\left([0,1]^k\right)$, where $k$ is some integer, by using a generalized max-linear model for the interpolation of a finite dimensional marginal distribution. The parameter space $[0,1]^k$ is chosen for convenience and could be replaced by any compact metric space $S$.

Let in what follows $\bm\eta=(\eta_{t})_{t\in [0,1]^k}$ be an SMSP with generator $\bm Z=(Z_{ t})_{ t\in [0,1]^k}$ and $D$-norm $\norm\cdot_{D}$. Choose pairwise different points $ s_1,\dotsc, s_d\in [0,1]^k$ and obtain a standard max-stable rv $(\eta_{ s_1},\dotsc,\eta_{ s_d})$ with generator $(Z_{ s_1},\dotsc,Z_{ s_d})$ and $D$-norm $\norm\cdot_{D_{1,\dotsc,d}}$, i.\,e.,
\[
P(\eta_{ s_1}\leq x_1,\dotsc,\eta_{ s_d}\leq x_d)=\exp\left(-E\left(\max_{i=1,\dotsc,d}\left(\abs{x_i}Z_{ s_i}\right)\right)\right)=:\exp\left(-\norm{\bm x}_{D_{1,\dotsc,d}}\right),
\]
$\bm x=(x_1,\dotsc,x_d)\leq\bm 0$. Our aim is to find another SMSP that interpolates the above rv.

Take functions $g_i\in\bar C^+\left([0,1]^k\right)$, $i=1,\dotsc,d$, with the property
\begin{equation}\label{eq:norming_functions_standardization}
\norm{(g_1( t),\dotsc,g_d( t))}_{D_{1,\dotsc,d}}=1\text{ for all } t\in[0,1]^k.
\end{equation}
Then the stochastic process $\hat{\bm\eta}=(\hat\eta_{ t})_{ t\in[0,1]^k}$ that is generated by the \emph{generalized max-linear model}
\begin{equation}\label{eq:generalized_max_linear_model}
\hat\eta_{ t}:=\max_{i=1,\dotsc,d}\frac{\eta_{ s_i}}{g_i( t)},\qquad  t\in[0,1]^k,
\end{equation}
defines an SMSP with generator
\begin{equation}\label{eq:generalized_max_linear_model_generator}
\hat Z_{ t}=\max_{i=1,\dotsc,d}\left(g_i( t)Z_{ s_i}\right),\qquad t\in[0,1]^k,
\end{equation}
due to property \eqref{eq:norming_functions_standardization}, see \citet{falhz13} for details. The case $\norm\cdot_{D_{1,\dotsc,d}}=\norm\cdot_1$ leads to the regular \emph{max-linear model}, cf. \citet{wansto11}.

If we want $\hat{\bm\eta}$ to interpolate $(\eta_{ s_1},\dotsc,\eta_{ s_d})$, then we only have to demand
\begin{equation}\label{eq:norming_functions_interpolation}
g_i( s_j)=\delta_{ij}:=\begin{cases}1,&\qquad i=j,\\0,&\qquad i\neq j,\end{cases}\quad 1\leq i,j\leq d.
\end{equation}
Recall that $\eta_{ s_i}$ is negative with probability one. We call $\hat{\bm\eta}$ the \emph{discretized version} of $\bm\eta$ with grid $\{ s_1,\dotsc, s_d\}$ and weight functions $g_1,\dotsc,g_d$, when the weight functions satisfy both \eqref{eq:norming_functions_standardization} and \eqref{eq:norming_functions_interpolation}.

\begin{exam}\label{exam:onedimensional_model}
\upshape
In the one-dimensional case $k=1$ the weight functions $g_i$ can be chosen as follows. Take a grid $0:=s_1<s_2<\cdots<s_{d-1}<s_d=:1$ of the interval $[0,1]$ and denote by $\norm\cdot_{D_{i-1,i}}$ the $D$-norm pertaining to $(\eta_{s_{i-1}},\eta_{s_i})$, $i=2,\dotsc,d$. Put
\begin{align*}
g_1(t)&:=\begin{cases}\dfrac{s_{2}-t}{\norm{(s_{2}-t,t)}_{D_{1,2}}},\quad &t\in[0,s_2], \\ 0,\quad &\text{else},\end{cases}\\
g_i(t)&:=\begin{cases}\dfrac{t-s_{i-1}}{\norm{(s_i-t,t-s_{i-1})}_{D_{i-1,i}}},\quad &t\in[s_{i-1},s_i], \\ \dfrac{s_{i+1}-t}{\norm{(s_{i+1}-t,t-s_{i})}_{D_{i,i+1}}},\quad &t\in[s_i,s_{i+1}], \\ 0,\quad &\text{else},\end{cases}\quad i=2,\dotsc,d-1,\\
g_d(t)&:=\begin{cases}\dfrac{t-s_{d-1}}{\norm{(s_d-t,t-s_{d-1})}_{D_{d-1,d}}},\quad &t\in[s_{d-1},1], \\ 0,\quad &\text{else}.\end{cases}
\end{align*}
This model has been studied intensely in \citet{falhz13}. The functions $g_1,\dotsc,g_d$ are continuous and satisfy conditions \eqref{eq:norming_functions_standardization} and \eqref{eq:norming_functions_interpolation}, so they provide an interpolating generalized max-linear model on $C[0,1]$.
\end{exam}

\begin{exam}\label{exam:multidimensional_model}
\upshape
Choose pairwise different points $s_1,\dotsc,s_d\in [0,1]^k$ and an arbitrary norm $\norm\cdot$ on $\R^k$. Define
\[
\tilde g_i(t):=\min_{j\neq i}\left(\norm{t-s_j}\right),\qquad t\in[0,1]^k,\; i=1,\dotsc,d.
\]
In order to normalize, put
\[
g_i(t):=\frac{\tilde g_i(t)}{\norm{(\tilde g_1(t),\dotsc,\tilde g_d(t))}_{D_{1,\dotsc,d}}},\quad t\in[0,1]^k,\quad i=1,\dotsc,d.
\]
These functions $g_i$ are well-defined since the denominator never vanishes: Suppose there is $ t\in[0,1]^k$ with $\tilde g_1( t)=\cdots=\tilde g_d( t)=0$. Then $\min_{j\neq i}\left(\norm{ t- s_j}\right)=0$ for all $i=1,\dotsc,d$. Now fix $i\in\{1,\dotsc,d\}$. There is $j\neq i$ with $ t= s_j$. But on the other hand, we have also $\min_{k\neq j}\left(\norm{ t- s_k}\right)=0$ which implies that there is $k\neq j$ with $ t= s_k= s_j$ which is a contradiction.

The functions $g_i$, $i=1,\dotsc,d$, are clearly functions in $\bar C^+\left([0,1]^k\right)$ that also satisfy condition \eqref{eq:norming_functions_standardization} and \eqref{eq:norming_functions_interpolation} as can be seen as follows. We have for $t\in[0,1]^k$
\begin{align*}
&\norm{\big(g_1(t),\dots,g_d(t)\big)}_{D_{1,\dots,d}}\\
&=\norm{\left(\frac{\tilde g_1(t)}{\norm{(\tilde g_1(t),\dots,\tilde g_d(t))}_{D_{1,\dots,d}}},\dots, \frac{\tilde g_d(t)}{\norm{(\tilde g_1(t),\dots,\tilde g_d(t))}_{D_{1,\dots,d}}} \right)}_{D_{1,\dots,d}}\\
&= \frac{\norm{\big(\tilde g_1(t),\dots,\tilde g_d(t)\big)}_{D_{1,\dots,d}}}{\norm{\big(\tilde g_1(t),\dots,\tilde g_d(t)\big)}_{D_{1,\dots,d}}}\\
&=1,
\end{align*}
which is condition \eqref{eq:norming_functions_standardization}. Note, moreover, that $\tilde g_i(s_j)=0$ if $i\not=j$. But this implies condition \eqref{eq:norming_functions_interpolation}:
\begin{align*}
g_i(s_j)&= \frac{\tilde g_i(s_j)}{\norm{\big(\tilde g_1(s_j),\dots,\tilde g_d(s_j)\big)}_{D_{1,\dots,d}}}\\
&=\frac{\tilde g_i(s_j)}{\norm{\big(0,\dots,0,\tilde g_j(s_j),0,\dots,0\big)}_{D_{1,\dots,d}}}\\
&=\frac{\tilde g_i(s_j)}{\tilde g_j(s_j) \norm{\big(0,\dots,0,1,0,\dots,0\big)}_{D_{1,\dots,d}}}\\
&= \frac{\tilde g_i(s_j)}{\tilde g_j(s_j)}=\delta_{ij}
\end{align*}
by the fact that a $D$-norm of each unit vector in $\R^d$ is one.
 Thus, we have found an interpolating generalized max-linear model on $C\left([0,1]^k\right)$.
\end{exam}

\subsection*{The mean squared error of the discretized version}

We start this section with a result that applies to bivariate standard max-stable rv in general.

\begin{lemma}\label{lem:properties_bivariate_smsrv}
Let $(X_1,X_2)$ be standard max-stable with generator $(Z_1,Z_2)$ and $D$-norm $\norm\cdot_D$.
\begin{enumerate}[(i)]
\item \[E(X_1X_2)=\int_0^{\infty}\frac{1}{\norm{(1,u)}^2_{D}}~du.\]
\item \[E(|Z_1-Z_2|)=2\left(\norm{(1,1)}_{D}-1\right).\]
\end{enumerate}
\end{lemma}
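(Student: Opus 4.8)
The plan is to handle the two parts separately; both reduce to elementary manipulations once one uses the defining relation $P(X_1\le x_1,X_2\le x_2)=\exp(-\norm{(x_1,x_2)}_D)$ for $x_1,x_2\le 0$ together with the basic facts that $\norm{(x,0)}_D=\abs{x}$ (so the margins are standard negative exponential) and $E(Z_i)=1$.

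For part (i) I would first pass to the nonnegative variables $Y_i:=-X_i$, which by the standard negative exponential margins are unit-rate exponential, so that $E(X_1X_2)=E(Y_1Y_2)$. Writing $Y_i=\int_0^\infty\mathbf 1(s<Y_i)\,ds$, multiplying the two integrals out, and applying Tonelli's theorem (the integrand being nonnegative) turns $E(Y_1Y_2)$ into the double integral $\int_0^\infty\!\int_0^\infty P(Y_1>s,\,Y_2>t)\,ds\,dt$; by continuity of the joint law and the symmetry $\norm{(-s,-t)}_D=\norm{(s,t)}_D$ the integrand equals $\exp(-\norm{(s,t)}_D)$. The key step is then the substitution $t=su$ for fixed $s>0$: positive homogeneity of the $D$-norm gives $\norm{(s,su)}_D=s\,\norm{(1,u)}_D$, and after a second application of Tonelli the inner integral $\int_0^\infty s\,e^{-s\norm{(1,u)}_D}\,ds=\norm{(1,u)}_D^{-2}$ yields exactly the claimed formula. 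I would remark in passing that the resulting integral is finite, e.g.\ by Cauchy--Schwarz $E(Y_1Y_2)\le\sqrt{E(Y_1^2)E(Y_2^2)}=2$, or because $\norm\cdot_D$ is equivalent to $\norm\cdot_\infty$ so that the integrand decays like $u^{-2}$.

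Part (ii) I expect to be essentially a one-liner: from the pointwise identity $\max(a,b)=\tfrac12\big(a+b+\abs{a-b}\big)$ applied to $(Z_1,Z_2)$, take expectations and use $E(Z_i)=1$ together with $E(\max(Z_1,Z_2))=\norm{(1,1)}_D$ to obtain $\norm{(1,1)}_D=1+\tfrac12\,E(\abs{Z_1-Z_2})$, which rearranges to the assertion.

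The only real obstacle, and it is a mild one, sits in part (i): making sure the layer-cake and Fubini/Tonelli steps are legitimate (they are, since all integrands are nonnegative) and using the $1$-homogeneity of $\norm\cdot_D$ correctly in the change of variables. There is no conceptual difficulty here, just bookkeeping, and part (ii) requires no genuine work beyond the algebraic identity for $\max$.
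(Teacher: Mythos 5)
Your argument is correct. For part (ii) you use exactly the paper's one-line proof: apply $\max(a,b)=\tfrac12(a+b+\abs{a-b})$ to $(Z_1,Z_2)$, take expectations, and use $E(Z_i)=1$ together with $E(\max(Z_1,Z_2))=\norm{(1,1)}_D$. For part (i) the paper gives no argument at all --- it simply cites Lemma 3.6 of Falk, Hofmann and Zott (2015) --- whereas you supply a complete, self-contained derivation. Your computation checks out: $E(X_1X_2)=E(Y_1Y_2)$ with $Y_i=-X_i$, the layer-cake representation and Tonelli give
\[
E(Y_1Y_2)=\int_0^\infty\!\!\int_0^\infty P(Y_1>s,\,Y_2>t)\,dt\,ds=\int_0^\infty\!\!\int_0^\infty e^{-\norm{(s,t)}_D}\,dt\,ds,
\]
and the substitution $t=su$ plus homogeneity and $\int_0^\infty s\,e^{-as}\,ds=a^{-2}$ yield the stated integral; your finiteness remarks (Cauchy--Schwarz, or $\norm{(1,u)}_D\ge\max(1,u)$) are sound, if not strictly needed. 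So the only substantive difference is that you prove what the paper outsources, and your derivation is the natural one (and presumably the one in the cited reference).
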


\begin{proof}
\begin{enumerate}[(i)]
\item See \citet[Lemma 3.6]{falhz13}.
\item The assertion follows from the general identity $\max(a,b)=\frac12(a+b+\abs{a-b})$.
\end{enumerate}
\end{proof}

Let $\hat{\bm\eta}=(\hat\eta_{ t})_{ t\in [0,1]^k}$ be the discretized version of $\bm\eta=(\eta_{ t})_{ t\in[0,1]^k}$ with grid $\{ s_1,\dotsc, s_d\}$ and weight functions $g_1,\dotsc,g_d$. In order to calculate the mean squared error of $\hat\eta_t$, we need the following lemma.

\begin{lemma}\label{lem:eta_hateta_sms}
Let $\hat{\bm Z}=(\hat Z_t)_{t\in[0,1]^k}$ be the generator of $\hat{\bm\eta}$ that is defined in \eqref{eq:generalized_max_linear_model_generator}. For each $t\in[0,1]^k$, the rv $(\eta_{ t},\hat\eta_{ t})$ is standard max-stable with generator $(Z_t,\hat Z_t)$ and $D$-norm
\[
\norm{(x,y)}_{D_{ t}}=E\left(\max\left(\abs x Z_t,\abs y\hat Z_t\right)\right)=\norm{\left(x,g_1( t)y,\dotsc,g_d( t)y\right)}_{D_{ t, s_1,\dotsc, s_d}},
\]
where $\norm{\cdot}_{D_{ t, s_1,\dotsc, s_d}}$ is the $D$-norm pertaining to $(\eta_{ t},\eta_{ s_1},\dotsc,\eta_{ s_d})$.
\end{lemma}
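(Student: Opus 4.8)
The plan is to compute the joint distribution function of $(\eta_t,\hat\eta_t)$ directly for arbitrary $(x,y)\le\bm 0$ and read off both that the pair is standard max-stable and that its $D$-norm has the claimed form. First I would recall from \eqref{eq:generalized_max_linear_model} that $\hat\eta_t=\max_{i=1,\dots,d}\eta_{s_i}/g_i(t)$, so that the event $\{\hat\eta_t\le y\}$ is, for $y\le 0$, exactly the event $\{\eta_{s_i}\le g_i(t)y\text{ for all }i=1,\dots,d\}$ (using $g_i(t)\ge 0$; the indices $i$ with $g_i(t)=0$ impose no constraint since $\eta_{s_i}<0$ a.s.). Hence
\[
P(\eta_t\le x,\hat\eta_t\le y)=P\big(\eta_t\le x,\;\eta_{s_1}\le g_1(t)y,\dots,\eta_{s_d}\le g_d(t)y\big).
\]

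Next I would apply the functional $D$-norm representation of the SMSP $\bm\eta$, or equivalently the finite-dimensional $D$-norm of the $(d+1)$-dimensional standard max-stable rv $(\eta_t,\eta_{s_1},\dots,\eta_{s_d})$ with generator $(Z_t,Z_{s_1},\dots,Z_{s_d})$, to rewrite the right-hand side as
\[
\exp\!\left(-\big\lVert\big(x,g_1(t)y,\dots,g_d(t)y\big)\big\rVert_{D_{t,s_1,\dots,s_d}}\right)
=\exp\!\left(-E\Big(\max\big(\lvert x\rvert Z_t,\;\lvert y\rvert\max_{i}g_i(t)Z_{s_i}\big)\Big)\right),
\]
where I have pulled the common nonnegative factor $|y|$ out of the coordinates $g_i(t)y$. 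By \eqref{eq:generalized_max_linear_model_generator} the inner maximum $\max_i g_i(t)Z_{s_i}$ is precisely $\hat Z_t$, so the exponent equals $E\big(\max(|x|Z_t,|y|\hat Z_t)\big)$. This simultaneously shows that $(\eta_t,\hat\eta_t)$ has a distribution function of standard max-stable type with generator $(Z_t,\hat Z_t)$ and exhibits the two asserted expressions for $\norm{(x,y)}_{D_t}$; for completeness one checks the generator is admissible, i.e. $Z_t,\hat Z_t\ge 0$ with $E(Z_t)=E(\hat Z_t)=1$, the latter for $\hat Z_t$ following from \eqref{eq:norming_functions_standardization} exactly as in the derivation of \eqref{eq:generalized_max_linear_model_generator} in \citet{falhz13}.

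I do not expect a serious obstacle here; the statement is essentially a bookkeeping consequence of the max-linear definition together with the $D$-norm formula for finite-dimensional margins of an SMSP. The one point that needs a little care is the handling of indices $i$ with $g_i(t)=0$: one must argue that dropping the (vacuous) constraints $\eta_{s_i}\le 0$ from the probability is legitimate, and correspondingly that the zero coordinates $g_i(t)y=0$ contribute nothing to the $D$-norm, which is immediate since $\norm{\cdot}_{D_{t,s_1,\dots,s_d}}$ is a norm and the term $|0|\,Z_{s_i}=0$ drops out of the maximum inside the expectation. A second minor point is measurability/continuity of $\hat Z_t$ as a process in $\bar C^+([0,1]^k)$, but this was already established when $\hat{\bm\eta}$ was introduced as an SMSP via \eqref{eq:generalized_max_linear_model_generator}, so it may be invoked directly.
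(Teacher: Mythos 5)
Your proposal is correct and follows essentially the same route as the paper: rewrite $\{\hat\eta_t\le y\}$ as the intersection of the events $\{\eta_{s_i}\le g_i(t)y\}$, apply the $D$-norm representation of the finite-dimensional margin $(\eta_t,\eta_{s_1},\dots,\eta_{s_d})$, pull out $\abs{y}$ and identify the inner maximum with $\hat Z_t$, then note $\hat Z_t\ge 0$ and $E(\hat Z_t)=1$. Your extra care about the indices with $g_i(t)=0$ and about the admissibility of the generator is a welcome bit of added rigor but does not change the argument.
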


\begin{proof}
As $\bm Z=(Z_{ t})_{ t\in[0,1]^k}$ is a generator of $\bm\eta$, we have for $x,y\leq 0$
\begin{align*}
P(\eta_{ t}\leq x,\hat\eta_{ t}\leq y)&=P(\eta_{ t}\leq x,\eta_{ s_1}\leq g_1( t)y,\dotsc,\eta_{ s_d}\leq g_d( t)y)\\
&=\exp\left(-E\left(\max\left(\abs xZ_{ t},\abs y\max\left(g_1( t)Z_{ s_1},\dotsc,g_d( t)Z_{ s_d}\right)\right)\right)\right)\\
&=\exp\left(-E\left(\max\left(\abs xZ_{ t},\abs y\hat Z_t\right)\right)\right).
\end{align*}
Then the assertion follows from the fact that $\hat Z_t\geq 0$ and $E(\hat Z_t)=1$.
\end{proof}

We can now use the preceding Lemmas to compute the mean squared error.

\begin{prop}\label{prop:mean squared error}
The mean squared error of $\hat{\eta_{ t}}$ is given by
\[
\MSE\left(\hat\eta_{t}\right):=E\left(\left(\eta_{ t}-\hat{\eta}_{ t}\right)^2\right)=2\left(2-\int_0^{\infty}\frac{1}{\norm{(1,u)}^2_{D_{ t}}}~du\right),\qquad t\in[0,1]^k.
\]
\end{prop}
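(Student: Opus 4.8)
The plan is to expand the square $E\left((\eta_t-\hat\eta_t)^2\right)=E(\eta_t^2)-2E(\eta_t\hat\eta_t)+E(\hat\eta_t^2)$ and evaluate each term separately. The two outer terms are immediate: since $\eta_t$ and $\hat\eta_t$ are both standard negative exponential, i.e. $P(\eta_t\le x)=P(\hat\eta_t\le x)=\exp(x)$ for $x\le 0$, a standard computation gives $E(\eta_t^2)=E(\hat\eta_t^2)=2$. (Indeed, if $-\eta_t$ is unit exponential then $E(\eta_t^2)=E((-\eta_t)^2)=\Var(-\eta_t)+(E(-\eta_t))^2=1+1=2$.) So the MSE equals $4-2E(\eta_t\hat\eta_t)$, and it remains to show $E(\eta_t\hat\eta_t)=\int_0^\infty\norm{(1,u)}_{D_t}^{-2}\,du$, since then the whole expression becomes $4-2\int_0^\infty\norm{(1,u)}_{D_t}^{-2}\,du=2\left(2-\int_0^\infty\norm{(1,u)}_{D_t}^{-2}\,du\right)$, as claimed.

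The key step is thus the identification of the cross term. Here I would invoke Lemma~\ref{lem:eta_hateta_sms}, which tells us that for each fixed $t\in[0,1]^k$ the pair $(\eta_t,\hat\eta_t)$ is itself a bivariate standard max-stable rv, with generator $(Z_t,\hat Z_t)$ and $D$-norm $\norm\cdot_{D_t}$. That puts us exactly in the setting of Lemma~\ref{lem:properties_bivariate_smsrv}(i), applied to $(X_1,X_2)=(\eta_t,\hat\eta_t)$ with $D$-norm $\norm\cdot_D=\norm\cdot_{D_t}$. Lemma~\ref{lem:properties_bivariate_smsrv}(i) then yields directly
\[
E(\eta_t\hat\eta_t)=\int_0^\infty\frac{1}{\norm{(1,u)}_{D_t}^2}\,du.
\]
Substituting this into $4-2E(\eta_t\hat\eta_t)$ gives the asserted formula.

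I do not expect any genuine obstacle here: the proposition is essentially a bookkeeping assembly of the two preceding lemmas together with the elementary moments of the standard negative exponential distribution. The only point requiring a word of care is that Lemma~\ref{lem:properties_bivariate_smsrv}(i) is stated for a generic bivariate standard max-stable rv, so one must explicitly note that Lemma~\ref{lem:eta_hateta_sms} guarantees $(\eta_t,\hat\eta_t)$ is of this type (in particular that $\hat Z_t\ge 0$ and $E(\hat Z_t)=1$, which is recorded in the proof of that lemma) before one is entitled to apply it. One should also keep track of the fact that $\eta_t,\hat\eta_t\le 0$ a.s., so $\eta_t\hat\eta_t\ge 0$ and the integral representation is an honest (possibly infinite, but here finite by equivalence of $\norm\cdot_{D_t}$ with $\norm\cdot_\infty$) nonnegative quantity. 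With these remarks in place the computation is a one-line substitution.
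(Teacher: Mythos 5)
Your proposal is correct and follows essentially the same route as the paper: expand the square, use $E(\eta_t^2)=E(\hat\eta_t^2)=\Var(\eta_t)+E(\eta_t)^2=2$, and identify the cross term via Lemma~\ref{lem:eta_hateta_sms} combined with Lemma~\ref{lem:properties_bivariate_smsrv}(i). Your extra remarks on why $(\eta_t,\hat\eta_t)$ qualifies as a bivariate standard max-stable rv are a sensible, if implicit in the paper, point of care.
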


\begin{proof}
Due to Lemma \ref{lem:eta_hateta_sms}, $(\eta_t,\hat\eta_t)$ is standard max-stable. Therefore, Lemma \ref{lem:properties_bivariate_smsrv} (i) and the fact that $E(\eta_t)=E(\hat\eta_t)=-1$ and $\Var(\eta_t)=\Var(\hat\eta_t)=1$ yield
\[
\MSE\left(\hat\eta_{ t}\right)=E\left(\eta_t^2\right)-2E\left(\eta_t\hat\eta_t\right)+E\left(\hat \eta_t^2\right)=4-2\int_0^{\infty}\frac{1}{\norm{(1,u)}^2_{D_{ t}}}~du.
\]
\end{proof}

\begin{lemma}\label{lem:mse_inequality}
The mean squared error of $\hat\eta_t$ satisfies
\[
\MSE\left(\hat\eta_t\right)\leq6 E\left(\abs{Z_t-\hat Z_t}\right),\qquad t\in[0,1]^k.
\]
\end{lemma}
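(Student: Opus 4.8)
The strategy is to combine the two preceding results — Proposition \ref{prop:mean squared error} and Lemma \ref{lem:properties_bivariate_smsrv}(ii) — by bounding the integral expression for $\MSE(\hat\eta_t)$ in terms of the quantity $\norm{(1,1)}_{D_t}-1$, and then recognizing via Lemma \ref{lem:properties_bivariate_smsrv}(ii) that $2(\norm{(1,1)}_{D_t}-1) = E(\abs{Z_t-\hat Z_t})$. So the heart of the matter is a purely analytic estimate: if $\norm\cdot$ is any $D$-norm on $\R^2$, then
\[
2\left(2-\int_0^\infty\frac{du}{\norm{(1,u)}^2}\right)\leq 12\bigl(\norm{(1,1)}-1\bigr).
\]
Everything else is bookkeeping using facts already in the excerpt ($E(\eta_t)=E(\hat\eta_t)=-1$, etc., which give the MSE formula).

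First I would set $a:=\norm{(1,1)}_{D_t}\in[1,2]$ and recall the standard sandwich bounds that hold for every $D$-norm on $\R^2$: $\norm{(x,y)}_\infty\leq\norm{(x,y)}_{D_t}\leq\norm{(x,y)}_1$, so $\max(1,u)\leq\norm{(1,u)}_{D_t}\leq 1+u$. These give $\int_0^\infty\norm{(1,u)}_{D_t}^{-2}\,du\leq\int_0^\infty\max(1,u)^{-2}\,du=1+\int_1^\infty u^{-2}\,du=2$, which re-proves $\MSE\geq 0$, but I need a quantitative lower bound on that integral. The key observation is that the integrand $\norm{(1,u)}_{D_t}^{-2}$ is always $\geq(1+u)^{-2}$, and on the region where the $D$-norm is ``nearly'' the sup-norm the integrand is close to $\max(1,u)^{-2}$; the deficit $2-\int_0^\infty\norm{(1,u)}_{D_t}^{-2}\,du$ should be controllable by how far $\norm\cdot_{D_t}$ sits above the sup-norm, which in turn is measured by $a-1$ since the function $s\mapsto\norm{(1,s)}_{D_t}$ is convex, equals $1$ at $s=0$, has slope $1$ as $s\to\infty$, and takes the value $a$ at $s=1$.

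The cleanest route I would try: write $2-\int_0^\infty\norm{(1,u)}_{D_t}^{-2}\,du=\int_0^\infty\bigl(\max(1,u)^{-2}-\norm{(1,u)}_{D_t}^{-2}\bigr)\,du$ and bound each integrand pointwise. Using $\frac1{p^2}-\frac1{q^2}=\frac{(q-p)(q+p)}{p^2q^2}\leq\frac{2q(q-p)}{p^2 q^2}=\frac{2(q-p)}{p^2 q}$ with $p=\max(1,u)\leq q=\norm{(1,u)}_{D_t}\leq 1+u$, one gets the integrand $\leq\frac{2(\norm{(1,u)}_{D_t}-\max(1,u))}{\max(1,u)^2\cdot\max(1,u)}$, and then I would bound $\norm{(1,u)}_{D_t}-\max(1,u)$ by something linear in $a-1$ on each of $[0,1]$ and $[1,\infty)$ using convexity: on $[0,1]$, $\norm{(1,u)}_{D_t}\leq 1+(a-1)u$ so the gap is $\leq(a-1)u$; on $[1,\infty)$, $\norm{(1,u)}_{D_t}-u\leq\norm{(1,u)}_{D_t}-\norm{(0,u)}_{D_t}\leq\norm{(1,0)}_{D_t}=1$, which is too crude, so instead use that $u\mapsto\norm{(1,u)}_{D_t}-u$ is nonincreasing (difference of convex function and its asymptote) with value $a-1$ at $u=1$, hence $\leq a-1$ throughout $[1,\infty)$. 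Integrating $\int_0^1\frac{2(a-1)u}{1}\,du+\int_1^\infty\frac{2(a-1)}{u^3}\,du=(a-1)+(a-1)=2(a-1)$ gives $2-\int_0^\infty\norm{(1,u)}_{D_t}^{-2}\,du\leq 2(a-1)$, so $\MSE(\hat\eta_t)\leq 4(a-1)=2\cdot 2(a-1)=2E(\abs{Z_t-\hat Z_t})\leq 6E(\abs{Z_t-\hat Z_t})$, which is even a bit stronger than stated (giving room for the constant).

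**Main obstacle.** The one delicate point is getting a genuinely linear-in-$(a-1)$ bound for the gap $\norm{(1,u)}_{D_t}-\max(1,u)$ uniformly over all $u$, since the naive sandwich $\max(1,u)\leq\norm{(1,u)}_{D_t}\leq 1+u$ only yields a gap bounded by $\min(1,u)$ rather than by $a-1$. The fix is to exploit convexity of $s\mapsto\norm{(1,s)}_{D_t}$ together with its two pinned features (value $1$ at $0$, asymptotic slope $1$): on $[0,1]$ convexity gives the chord bound $\norm{(1,u)}_{D_t}\leq 1+(a-1)u$, and on $[1,\infty)$ the map $u\mapsto\norm{(1,u)}_{D_t}-u$ is convex and bounded (it tends to a nonnegative limit), hence nonincreasing, so it stays $\leq$ its value $a-1$ at $u=1$. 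Once this uniform gap bound is in hand, the integral estimate is a one-line computation and the rest follows from Lemma \ref{lem:properties_bivariate_smsrv}(ii) and Proposition \ref{prop:mean squared error}. If the convexity argument for $u\ge 1$ turns out to need the asymptote identification, I would note $\norm{(1,u)}_{D_t}=u\,\norm{(1/u,1)}_{D_t}=u\bigl(1+(\norm{(1/u,1)}_{D_t}-1)\bigr)$ and use that $v\mapsto\norm{(v,1)}_{D_t}-1$ is convex with value $0$ at $v=0$ and $a-1$ at $v=1$, so $\norm{(1/u,1)}_{D_t}-1\leq(a-1)/u$ for $u\geq 1$, giving $\norm{(1,u)}_{D_t}-u\leq a-1$ directly.
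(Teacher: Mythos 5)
Your proof is correct and follows essentially the same route as the paper's: both rewrite $2-\int_0^\infty\norm{(1,u)}_{D_t}^{-2}\,du$ as $\int_0^\infty\bigl(\norm{(1,u)}_\infty^{-2}-\norm{(1,u)}_{D_t}^{-2}\bigr)\,du$, factor the difference of reciprocal squares, split the integral at $u=1$ (using homogeneity for the tail), and reduce everything to $\norm{(1,1)}_{D_t}-1$ via Lemma \ref{lem:properties_bivariate_smsrv}(ii). The only substantive difference is that where the paper bounds the gap $\norm{(1,u)}_{D_t}-\norm{(1,u)}_\infty$ by the constant $\norm{(1,1)}_{D_t}-1$ using monotonicity of the $D$-norm, you use the convexity chord bound $\norm{(1,u)}_{D_t}\le 1+\bigl(\norm{(1,1)}_{D_t}-1\bigr)u$ on $[0,1]$ and the corresponding estimate on the tail, which is sharper and yields the constant $2$ in place of $6$.
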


\begin{proof}
We have
\begin{align*}
&2-\int_0^{\infty}\frac{1}{\norm{(1,u)}_{D_t}^2}~du\\
&=\int_0^{\infty}\frac{1}{\norm{(1,u)}_{\infty}^2}~du-\int_0^{\infty}\frac{1}{\norm{(1,u)}_{D_t}^2}~du\\
&=\int_0^{\infty}\left(\norm{(1,u)}_{D_t}-\norm{(1,u)}_{\infty}\right)\frac{\norm{(1,u)}_{D_t}+\norm{(1,u)}_{\infty}}{\norm{(1,u)}_{D_t}^2\norm{(1,u)}_{\infty}^2}~du\\
&=\int_0^{1}\left(\norm{(1,u)}_{D_t}-1\right)\frac{\norm{(1,u)}_{D_t}+1}{\norm{(1,u)}_{D_t}^2}~du+\int_1^{\infty}\left(\norm{(1,u)}_{D_t}-u\right)\frac{\norm{(1,u)}_{D_t}+u}{u^2\norm{(1,u)}_{D_t}^2}~du\\
&\leq 3\int_0^{1}\left(\norm{(1,u)}_{D_t}-1\right)~du+2\int_1^{\infty}\frac{\norm{(1/u,1)}_{D_t}-1}{u^2}~du\\
&=:3I_1+2I_2.
\end{align*}
Since every $D$-norm is monotone, we have
\[
\norm{(1,u)}_{D_t}\leq \norm{(1,1)}_{D_t},~ u\in[0,1],\text{ and } \norm{(1/u,1)}_{D_t}\leq \norm{(1,1)}_{D_t},~ u>1,
\]
and, thus, by Lemma \ref{lem:properties_bivariate_smsrv} (ii)
\begin{equation*}
I_1+I_2\leq \norm{(1,1)}_{D_t}-1+\left(\norm{(1,1)}_{D_t}-1\right)\int_1^{\infty}u^{-2}~du=E\left(\abs{Z_t-\hat Z_t}\right).
\end{equation*}
\end{proof}

\begin{rem}\upshape
The upper bound $E\left(\abs{Z_t-\hat Z_t}\right)$ in Lemma \ref{lem:mse_inequality} gets small if the distance between $t$ and its nearest neighbor $s_j$, say, in the grid $\set{s_1,\dots,s_d}$ gets small, which can be seen as follows. The triangle inequality implies
\[
\abs{Z_t-\hat Z_t} \le \abs{Z_t- Z_{s_j}} + \abs{Z_{s_j}-\max_{i=1,\dots,d}\left(g_i(t)Z_{s_i}\right)}.
\]
From the condition $g_i(s_j)=\delta_{ij}$ we obtain the representation
\[
Z_{s_j}= \max_{i=1,\dots,d}\left(g_i(s_j)Z_{s_i}\right)
\]
and, thus,
\begin{align*}
\abs{Z_{s_j}-\max_{i=1,\dots,d}\left(g_i(t)Z_{s_i}\right)}
&=\abs{ \max_{i=1,\dots,d}\left(g_i(s_j)Z_{s_i}\right) - \max_{i=1,\dots,d}\left(g_i(t)Z_{s_i}\right) }\\
&\le \max_{i=1,\dots,d}\left(\abs{g_i(t)-g_i(s_j)}Z_{s_i}\right)
\end{align*}
by elementary arguments. As a consequence we obtain
\begin{align*}
&E\left(\abs{Z_t-\hat Z_t}\right)\\
&\le E\left(\abs{Z_t-Z_{s_j}} \right) + E\left(\max_{i=1,\dots,d}\left(\abs{g_i(t)-g_i(s_j)}Z_{s_i}\right) \right)\\
&= E\left(\abs{Z_t-Z_{s_j}} \right) + \norm{\big(\abs{g_1(t)-g_1(s_j)},\dots, \abs{g_d(t)-g_d(s_j)}  \big)}_{D_{1,\dots,d}}\\
&\le E\left(\abs{Z_t-Z_{s_j}} \right) + \max_{i=1,\dots,d} \abs{g_i(t)-g_i(s_j)} \; \norm{(1,\dots,1)}_{D_{1,\dots,d}}\\
&\to_{\abs{t-s_j}\to 0}0
\end{align*}
by the fact that each $D$-norm $\norm\cdot_D$ is monotone, i.e., $\norm{\bfx}_D\le\norm{\bfy}_D$ if $\bfzero\le\bfx\le\bfy\in\R^d$, and by the continuity of the functions $g_1,\dots,g_d$ and $\bfZ$.
\end{rem}

\begin{exam}
\upshape
Choose as a generator process $\bfZ=(Z_t)_{t\in[0,1]^k}$ of a $D$-norm
\[
Z_{t}:=\exp\left(X_{t}-\frac{\sigma^2(t)}2 \right),\qquad t\in[0,1]^k,
\]
where $\left(X_{t}\right)_{t\in \R^k}$ is a continuous zero mean Gaussian process with stationary increments, $\sigma^2(t):= E\left(X_{t}^2\right)$ and $X_0=0$. This model was originally created by \citet{brore77}, and developed by \citet{kaschdeh09} for max-stable random fields $\bftheta=(\vartheta_{t})_{t\in[0,1]^k}$ with Gumbel margins, i.e., $P(\vartheta_t\le x)=\exp(-e^{-x})$, $x\in\R$. The transformation to a SMSP $(\eta_{t})_{t\in[0,1]^k}$ is straightforward by putting $\eta_{t}:=-\exp(-\vartheta_{t})$, $t\in[0,1]^k$.

Explicit formulae for the corresponding $D$-norm
\[
\norm{f}_D = E\left(\sup_{t\in[0,1]^k} (\abs{f(t)}Z_{t})\right),\qquad f\in E([0,1]^k),
\]
are only available for bivariate $\norm\cdot_{D_{t_1,t_2}}$ and trivariate $\norm\cdot_{D_{t_1,t_2,t_3}}$ $D$-norms pertaining to the random vectors $(\eta_{t_1},\eta_{t_2})$ and $(\eta_{t_1},\eta_{t_2},\eta_{t_3})$, respectively, see \citet{huserdav13}. In the bivariate case we have for $(x_1,x_2)\in\R^2$
\begin{align*}
\norm{(x_1,x_2)}_{D_{t_1,t_2}}
&= \abs{x_1} \Phi\left(\frac{\sigma(\abs{t_1-t_2})}2 + \frac1{\sigma(\abs{t_1-t_2})}
\log\left(\frac{\abs{x_1}}{\abs{x_2}}\right)\right)\\
&\hspace*{1cm}+ \abs{x_2} \Phi\left(\frac{\sigma(\abs{t_1-t_2})}2 + \frac1{\sigma(\abs{t_1-t_2})}
\log\left(\frac{\abs{x_2}}{\abs{x_1}}\right)\right),
\end{align*}
where $\Phi$ denotes the standard normal distribution function and the absolute value $\abs{t_1-t_2}$ is meant component wise, see \citet[Remark 24]{kabl09}.

This Brown-Resnick model could in particular  be used for the generalized max-linear model in dimension $k=1$ as in Example \ref{exam:onedimensional_model}, since in this case the approximation $\hat\bfeta$ of $\bfeta$ only uses bivariate $D$-norms $\norm\cdot_{t_1,t_2}$.

\end{exam}

\section{A generalized max-linear model based on kernels}

\subsection*{The model}

There is the need for the definition of $d$ functions $g_1,\dots,g_d$ satisfying certain constraints in the \emph{ordinary} generalized max-linear model with $d=d(n)$ tending to infinity as the grid $s_1,\dots,s_d$ gets dense in the index set. 
For the kernel approach introduced in this section, this is reduced to the choice of just one kernel and a bandwidth. And in this case we can establish convergence to zero of MSE and IMSE as the grid gets dance, essentially without further conditions. 
This approach was briefly mentioned in \citet{falhz13} and is evaluated here.

The disadvantages are: The interpolation is not an exact one at the grid points, i.e., $\hat\eta_{s_j}\not=\eta_{s_j}$. This is due to the fact that the generated functions do not satisfy the condition $g_i(s_j)=\delta_{ij}$ exactly, but only in the limit as $h$ tends to zero, see Lemma \ref{lem:convergence_to_kronecker}. The choice of an optimal bandwidth, which is statistical folklore in kernel density estimation, is still an open problem here.

 Again, throughout the whole section, let $\bm\eta=(\eta_t)_{t\in[0,1]^k}$ be an SMSP with generator $\bm Z=(Z_t)_{t\in[0,1]^k}$ and denote by $\norm\cdot_{s_1,\dotsc,s_d}$ the $D$-norm pertaining to $(\eta_{s_1},\dotsc,\eta_{s_d})$.

 Let $K:[0,\infty)\to[0,1]$ be a continuous and strictly monotonically decreasing function (kernel) with the two properties
\begin{equation}\label{eq:condition_on_kernel}
K(0)=1,\qquad \lim_{x\to\infty}\frac{K(ax)}{K(bx)}=0,\quad 0\le b< a.
\end{equation}
The exponential kernel $K_e(x)=\exp(-x)$, $x\ge 0$, is a typical example. Choose an arbitrary norm $\norm\cdot$ on $\R^k$ and a grid of pairwise different points $\{ s_1,\dotsc, s_d\}$ in $[0,1]^k$. Put for $i=1,\dotsc,d$ and the bandwidth $h>0$
\[
g_{i,h}( t):=\frac{K(\norm{t-s_i}/h)}{\norm{(K(\norm{t-s_1}/h),\dots,K(\norm{t-s_d}/h))}_{D_{s_1,\dotsc,s_d}}},\quad t\in[0,1]^k.
\]
Define for $i=1,\dots,d$
\begin{equation}\label{eq:set_closest_points}
N(s_i):=\set{t\in[0,1]^k:\,\norm{t-s_i}\le \norm{t-s_j},\,j\not=i},
\end{equation}
which is the set of those points $t\in[0,1]^k$ that are closest to the grid point $s_i$.

\begin{lemma}\label{lem:convergence_to_kronecker}
We have for arbitrary $t\in[0,1]^k$ and $1\le i\le d$
\[
g_{i,h}(t)\to_{h\downarrow 0}\begin{cases}
1&,\mbox{ if }t=s_i\\
0&,\mbox{ if }t\not\in N(s_i)
\end{cases}
\]
as well as $g_{i,h}(t)\le 1$.
\end{lemma}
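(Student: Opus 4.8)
The plan is to treat the three assertions one after another. The common tools will be: every $D$-norm dominates the sup-norm (which I will derive inline from the definition $\norm{\bm x}_D=E(\max_j\abs{x_j}Z_j)$ together with $E(Z_j)=1$), the continuity of norms on $\R^d$, the normalisation $\norm{e_i}_D=1$ of unit vectors, and the fact that the ratio condition in \eqref{eq:condition_on_kernel}, read with $b=0$, gives $\lim_{x\to\infty}K(x)=0$ since $K(0)=1$.

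Write $v_h:=\big(K(\norm{t-s_1}/h),\dots,K(\norm{t-s_d}/h)\big)$, so that $g_{i,h}(t)=(v_h)_i/\norm{v_h}_{D_{s_1,\dots,s_d}}$. First I would dispatch $g_{i,h}(t)\le 1$: for the generator $(Z_{s_1},\dots,Z_{s_d})$ of $\norm\cdot_{D_{s_1,\dots,s_d}}$ and an index $j_0$ attaining $\max_j\abs{(v_h)_j}$ one has $\norm{v_h}_{D_{s_1,\dots,s_d}}=E\big(\max_j\abs{(v_h)_j}Z_{s_j}\big)\ge\abs{(v_h)_{j_0}}E(Z_{s_{j_0}})=\norm{v_h}_\infty\ge(v_h)_i$; since all coordinates of $v_h$ are nonnegative, this yields $0\le g_{i,h}(t)\le 1$.

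Next, for $t=s_i$ the numerator equals $K(0)=1$ for every $h>0$, whereas for $j\ne i$ the grid points being pairwise different forces $\norm{t-s_j}=\norm{s_i-s_j}>0$, hence $\norm{s_i-s_j}/h\to\infty$ and $K(\norm{s_i-s_j}/h)\to 0$ as $h\downarrow 0$. Thus $v_h\to e_i$, and continuity of the norm gives $\norm{v_h}_{D_{s_1,\dots,s_d}}\to\norm{e_i}_{D_{s_1,\dots,s_d}}=1$, so $g_{i,h}(s_i)\to 1$. For $t\notin N(s_i)$, definition \eqref{eq:set_closest_points} supplies some $j\ne i$ with $\norm{t-s_j}<\norm{t-s_i}$; invoking $\norm{v_h}_{D_{s_1,\dots,s_d}}\ge\norm{v_h}_\infty\ge K(\norm{t-s_j}/h)$ once more,
\[
0\le g_{i,h}(t)\le\frac{K(\norm{t-s_i}/h)}{K(\norm{t-s_j}/h)},
\]
and putting $a:=\norm{t-s_i}>b:=\norm{t-s_j}\ge 0$ and $x:=1/h\to\infty$, the right-hand side equals $K(ax)/K(bx)\to 0$ by the second property in \eqref{eq:condition_on_kernel}.

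I do not expect a genuine obstacle here. The one point that needs a moment's care is that $\lim_{x\to\infty}K(x)=0$ is not postulated directly but must be extracted from the ratio condition (with $b=0$), and that the denominator $K(\norm{t-s_j}/h)$ in the displayed ratio is strictly positive — which holds because a continuous, strictly decreasing $K$ with $K(0)=1$ and values in $[0,1]$ cannot vanish at a finite argument. Everything else is routine bookkeeping with the $D$-norm facts recalled in the Preliminaries.
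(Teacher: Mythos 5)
Your proof is correct and follows essentially the same route as the paper's: bounding the $D$-norm in the denominator from below by the sup-norm to get both $g_{i,h}(t)\le 1$ and the ratio bound $K(\norm{t-s_i}/h)/K(\norm{t-s_j}/h)$, then invoking the growth condition \eqref{eq:condition_on_kernel} for $t\notin N(s_i)$ and the normalisation $\norm{e_i}_D=1$ for $t=s_i$. You are in fact slightly more careful than the paper in spelling out that $\lim_{x\to\infty}K(x)=0$ follows from the ratio condition with $b=0$ and that $K$ is strictly positive, both of which the paper leaves implicit.
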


\begin{proof}
The convergence $g_{i,h}( s_i)\to_{h\downarrow0}1$ follows from the fact that $K(0)=1$ and that the $D$-norm of a unit vector is 1.
The fact that an arbitrary $D$-norm is bounded below by the sup-norm together with the monotonicity of $K$ implies for $t\in[0,1]^k$
\begin{equation*}
g_{i,h}(t)\le \frac{K\left(\norm{t-s_i}/h\right)}{\max_{1\le j\le d}K\left(\norm{t-s_j}/h\right)}
=\frac{K\left(\frac{\norm{t-s_i}}h\right)}{K\left(\frac{\min_{1\le j\le d}\norm{t-s_j}}h\right)}\le 1.
\end{equation*}
Note that $K\left(\norm{t-s_i}/h\right)/K\left(\min_{1\le j\le d}\norm{t-s_j}/h\right) \to_{h\downarrow 0}0$
if $t\not\in N(s_i)$ by the required growth condition on the kernel $K$ in \eqref{eq:condition_on_kernel}.
\end{proof}

The above Lemma shows in particular $g_{i,h}( s_j)\to_{h\downarrow0}\delta_{ij}$ which is close to condition \eqref{eq:norming_functions_interpolation}. Obviously, the functions $g_{i,h}$ are constructed in such a way that condition \eqref{eq:norming_functions_standardization} holds exactly. Therefore, we obtain the generalized max-linear model
\begin{equation*}
\hat\eta_{ t,h}=\max_{i=1,\dotsc,d}\frac{\eta_{ s_i}}{g_{i,h}( t)},\qquad t\in[0,1]^k,
\end{equation*}
which does not interpolate $(\eta_{ s_1},\dotsc,\eta_{ s_d})$ exactly, but $\hat\eta_{s_i,h}$ converges to $\eta_{s_i}$ as $h\downarrow0$. Note that the limit functions $\lim_{h\downarrow0}g_{i,h}$ are not necessarily continuous: For instance, there may be $t_0\in[0,1]^k$ with $\norm{t_0-s_1}=\cdots=\norm{t_0-s_d}$. Then $ t_0\in\partial N( s_1)$ and $\lim_{h\downarrow0}g_{1,h}( t_0)=1/\norm{(1,\dotsc,1)}_{D_{1,\dotsc,d}}$, but $\lim_{h\downarrow0}g_{1,h}( t)=0$ for all $ t\notin N( s_1)$ due to Lemma \ref{lem:convergence_to_kronecker}.

\subsection*{Convergence of the mean squared error}

In this section we investigate a sequence of kernel-based generalized max-linear models, where the diameter of the grids decreases. We analyze under which conditions the integrated mean squared error of $(\hat\eta_{t,h})_{t\in[0,1]^k}$ converges to zero. We start with a general result on generator processes.

\begin{lemma}\label{lem:generator_uniformly_continuous}
Let $(Z_t)_{t\in[0,1]^k}$ be a generator of an SMSP and $\eps_n$, $n\in\N$, be a null sequence. Then
\begin{equation*}
E\left(\sup_{\norm{t-s}\leq \eps_n}\abs{Z_t-Z_s}\right)\to_{n\to\infty}0,
\end{equation*}
where $\norm\cdot$ is an arbitrary norm on $\R^k$.
\end{lemma}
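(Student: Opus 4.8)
The plan is to exploit the two defining features of a generator process $\bm Z = (Z_t)_{t\in[0,1]^k}$ of an SMSP: first, the sample paths lie in $\bar C^+([0,1]^k)$, so each path is continuous, hence \emph{uniformly} continuous on the compact cube $[0,1]^k$; second, the integrability condition \eqref{eq:condition_generator}, namely $E\bigl(\sup_{t\in[0,1]^k} Z_t\bigr) < \infty$, which furnishes an integrable dominating envelope. Write
\[
W_n := \sup_{\norm{t-s}\le\eps_n}\abs{Z_t - Z_s}.
\]
First I would observe that for every fixed $\omega$, uniform continuity of the path $t\mapsto Z_t(\omega)$ together with $\eps_n\downarrow 0$ gives $W_n(\omega)\to 0$ as $n\to\infty$; here one may as well assume $\eps_n$ is monotone decreasing (pass to a subsequence bounding the original sequence, or simply note the statement is about a given null sequence and the $\sup$ is taken over a shrinking set once we replace $\eps_n$ by $\sup_{m\ge n}\eps_m$, which is still a null sequence). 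So $W_n\to 0$ pointwise.

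Next I would produce an integrable bound uniform in $n$. Since $0\le Z_t\le \sup_{u}Z_u =: M$ for all $t$, the triangle inequality gives $W_n\le 2M$ pointwise, and $E(2M) = 2E\bigl(\sup_{u\in[0,1]^k}Z_u\bigr) < \infty$ by \eqref{eq:condition_generator} (or directly by the assumption $Z_t\le c$ a.s. in the unweakened formulation, in which case $W_n\le 2c$). With $W_n\to 0$ a.s. and $0\le W_n\le 2M\in L^1$, the dominated convergence theorem yields $E(W_n)\to 0$, which is the assertion.

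The only genuine subtlety — and the step I would be most careful about — is measurability: one must check that $W_n = \sup_{\norm{t-s}\le\eps_n}\abs{Z_t-Z_s}$ is a bona fide random variable so that taking its expectation is meaningful. This is where the continuity of the sample paths is essential: because $(t,s)\mapsto \abs{Z_t(\omega)-Z_s(\omega)}$ is continuous on the separable compact set $\{(t,s)\in[0,1]^k\times[0,1]^k:\norm{t-s}\le\eps_n\}$, the supremum over this set equals the supremum over a fixed countable dense subset, hence is a countable supremum of random variables and therefore measurable. Likewise $M=\sup_u Z_u$ is measurable and, by \eqref{eq:condition_generator}, integrable. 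Once measurability is settled, the argument is just pathwise uniform continuity plus dominated convergence, with no further conditions needed — matching the remark preceding the lemma that this holds ``essentially without further conditions.''
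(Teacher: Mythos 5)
Your proposal is correct and follows exactly the paper's own argument: pointwise convergence of $\sup_{\norm{t-s}\le\eps_n}\abs{Z_t-Z_s}$ to zero via uniform continuity of the sample paths on the compact cube, domination by $2\sup_{t\in[0,1]^k}Z_t$ which is integrable by \eqref{eq:condition_generator}, and the dominated convergence theorem. The additional care you take with measurability of the supremum (reducing to a countable dense subset using path continuity) is a point the paper passes over silently, but it does not change the route.
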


\begin{proof}
The paths of $(Z_t)_{t\in[0,1]^k}$ are continuous, so they are also uniformly continuous. Therefore, $\sup_{\norm{t-s}\leq \eps_n}\abs{Z_t-Z_s}\to_{n\to\infty}0$. Furthermore,
\[
\sup_{\norm{t-s}\leq \eps_n}\abs{Z_t-Z_s}\leq 2\sup_{t\in[0,1]^k}Z_t
\]
with $E\left(\sup_{t\in[0,1]^k}Z_t\right)<\infty$ due to property \eqref{eq:condition_generator} of a generator. The assertion now follows from the dominated convergence theorem.
\end{proof}

Let $\mathcal G_n:=\set{s_{1,n},\dotsc,s_{d(n),n}}$, $n\in\N$, be a set of distinct points in $[0,1]^k$ with the property
\[
\forall n\in\N~\forall t\in[0,1]^k~ \exists s_{i,n}\in\mathcal G_n:~\norm{t-s_{i,n}}\leq\eps_n,
\]
where $\eps_n\to_{n\to\infty}0$. Define, for instance, $\mathcal G_n$ in such a way that
\[
\eps_n:=\max_{i=1,\dotsc,d}\sup_{s,t\in N(s_{i,n})}\norm{s-t}\to_{n\to\infty}0,
\]
with $N(s_{i,n})$ as defined in \eqref{eq:set_closest_points}.  Clearly, $d:=d(n)\to_{n\to\infty}\infty$. Denote by $\norm\cdot_{D^{(n)}_{s_1,\dotsc,s_d}}$ the $D$-norm pertaining to $\eta_{s_{1,n}},\dotsc,\eta_{s_{d,n}}$. Let further $\hat{\bm\eta}_n=(\hat \eta_{t,n})_{t\in[0,1]^k}$ be the kernel-based discretized version of $\bm\eta$ with grid $\mathcal G_n$, that is,
\begin{equation*}
\hat\eta_{t,n}=\max_{i=1,\dotsc,d}\frac{\eta_{s_{i,n}}}{g_{i,n}(t)},\qquad t\in[0,1]^k,
\end{equation*}
where for $i=1,\dotsc,d$
\[
g_{i,n}(t)=\frac{K(\norm{t-s_{i,n}}/h_n)}{\norm{(K(\norm{t-s_{1,n}}/h_n),\dots,K(\norm{t-s_{d,n}}/h_n))}_{D^{(n)}_{s_1,\dotsc,s_d}}},\quad t\in[0,1]^k,
\]
$K:[0,\infty)\to[0,1]$ is the continuous and strictly decreasing kernel function satisfying condition \eqref{eq:condition_on_kernel} and $h_n$, $n\in\N$, is some positive sequence. We have already seen in Lemma \ref{lem:convergence_to_kronecker} that $g_{i,n}(t)\in[0,1]$, $t\in[0,1]^k$, $n\in\N$. Furthermore we have the following result.

\begin{lemma}\label{lem:weight_functions_converge_to_one}
Choose $t\in[0,1]^k$. There is a sequence $i(n)$, $n\in\N$, such that $t\in\bigcap_{n\in\N}N(s_{i(n),n})$. Define $g_{i(n),n}$ and $\eps_n$ as above, $n\in\N$. Then
\[
\lim_{n\to\infty}g_{i(n),n}(t)=1,
\]
if $\eps_n\to_{n\to\infty}0$, $h_n\to_{n\to\infty}0$, $\eps_n/h_n\to_{n\to\infty}\infty$.
\end{lemma}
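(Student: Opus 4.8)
The plan is to rewrite $g_{i(n),n}(t)$ by means of the defining expectation of the $D$-norm and then to show that its denominator is asymptotically equal to its (strictly positive) numerator. Set $m_n:=\min_{1\le j\le d}\norm{t-s_{j,n}}$ and $D_n:=E\bigl(\max_{1\le j\le d}K(\norm{t-s_{j,n}}/h_n)\,Z_{s_{j,n}}\bigr)$. Since $t\in N(s_{i(n),n})$, the definition \eqref{eq:set_closest_points} forces the minimum to be attained at $j=i(n)$, so $\norm{t-s_{i(n),n}}=m_n$; moreover $t$ and $s_{i(n),n}$ both lie in the cell $N(s_{i(n),n})$, whose diameter is at most $\eps_n$, whence $m_n\le\eps_n$. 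Because $K$ is decreasing, $\max_{1\le j\le d}K(\norm{t-s_{j,n}}/h_n)=K(m_n/h_n)$, and consequently $g_{i(n),n}(t)=K(m_n/h_n)/D_n$. By Lemma \ref{lem:convergence_to_kronecker} we already have $g_{i(n),n}(t)\le1$, i.e. $D_n\ge K(m_n/h_n)>0$; so it remains to prove $D_n\le(1+o(1))K(m_n/h_n)$.

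The key step is to compare each $Z_{s_{j,n}}$ occurring in the maximum with the single variable $Z_t$. As $\max_jK(\norm{t-s_{j,n}}/h_n)Z_t=Z_t\,K(m_n/h_n)$ and $E(Z_t)=1$, we have $E\bigl(\max_jK(\norm{t-s_{j,n}}/h_n)Z_t\bigr)=K(m_n/h_n)$, so the elementary inequality $\abs{\max_ja_ju_j-\max_ja_jv_j}\le\max_ja_j\abs{u_j-v_j}$ (valid for $a_j\ge0$) gives
\[
\abs{D_n-K(m_n/h_n)}\;\le\;E\Bigl(\max_{1\le j\le d}K(\norm{t-s_{j,n}}/h_n)\,\abs{Z_{s_{j,n}}-Z_t}\Bigr),
\]
so it is enough to show this last expectation is $o(K(m_n/h_n))$.

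To estimate it, I would fix $\rho>0$ and split the indices into the \emph{near} ones, with $\norm{t-s_{j,n}}\le\rho$, and the \emph{far} ones, with $\norm{t-s_{j,n}}>\rho$. For a near index, $K(\norm{t-s_{j,n}}/h_n)\le K(m_n/h_n)$ and $\abs{Z_{s_{j,n}}-Z_t}\le\omega_t(\rho):=\sup_{\norm{s-t}\le\rho}\abs{Z_s-Z_t}$; for a far index, $K(\norm{t-s_{j,n}}/h_n)\le K(\rho/h_n)$ and $\abs{Z_{s_{j,n}}-Z_t}\le\bar Z:=\sup_{s\in[0,1]^k}Z_s$. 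Using $\max(x,y)\le x+y$ for $x,y\ge0$, taking expectations and dividing by $K(m_n/h_n)$ yields
\[
\frac{E\bigl(\max_jK(\norm{t-s_{j,n}}/h_n)\abs{Z_{s_{j,n}}-Z_t}\bigr)}{K(m_n/h_n)}\;\le\;E(\omega_t(\rho))+E(\bar Z)\,\frac{K(\rho/h_n)}{K(m_n/h_n)},
\]
with $E(\bar Z)<\infty$ by \eqref{eq:condition_generator}. Since $m_n\le\eps_n$ and $K$ is decreasing, $K(\rho/h_n)/K(m_n/h_n)\le K(\rho/h_n)/K(\eps_n/h_n)$; and because $\eps_n\to0$, for each fixed $M>1$ eventually $\rho/h_n\ge M(\eps_n/h_n)$, so $K(\rho/h_n)/K(\eps_n/h_n)\le K(M\eps_n/h_n)/K(\eps_n/h_n)\to0$ as $n\to\infty$, using $\eps_n/h_n\to\infty$ and the growth condition \eqref{eq:condition_on_kernel}. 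Hence $\limsup_n\abs{D_n/K(m_n/h_n)-1}\le E(\omega_t(\rho))$ for every $\rho>0$, and letting $\rho\downarrow0$ we get $E(\omega_t(\rho))\le E\bigl(\sup_{\norm{u-v}\le\rho}\abs{Z_u-Z_v}\bigr)\to0$ by Lemma \ref{lem:generator_uniformly_continuous}. Thus $D_n=(1+o(1))K(m_n/h_n)$ and $g_{i(n),n}(t)\to1$.

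I expect the \emph{far part}, i.e. the bound $K(\rho/h_n)/K(\eps_n/h_n)\to0$, to be the delicate point: \eqref{eq:condition_on_kernel} only yields decay of $K(ax)/K(bx)$ as $x\to\infty$ for a \emph{fixed} ratio $a/b>1$, whereas here the effective ratio $\rho/\eps_n$ itself tends to infinity. The remedy is the monotonicity trick of dominating $\rho/h_n$ by $M(\eps_n/h_n)$ for an arbitrarily large but fixed $M$, and this is exactly where the hypotheses $\eps_n\to0$ and $\eps_n/h_n\to\infty$ enter. A minor technical point is the measurability of $\omega_t(\rho)$, which follows from path-continuity (it coincides with a supremum over a countable dense set).
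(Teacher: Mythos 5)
Your proof is correct and follows essentially the same route as the paper's: both arguments split the grid points into those near $t$ and those far from $t$, kill the far contribution via the kernel ratio condition \eqref{eq:condition_on_kernel} applied at $x=\eps_n/h_n\to\infty$, and control the near contribution by the uniform continuity of the generator (Lemma \ref{lem:generator_uniformly_continuous}). The only (cosmetic) differences are that the paper uses the shrinking threshold $2\eps_n$ and compares with $Z_{s_{i(n),n}}$ rather than your fixed threshold $\rho$ (sent to $0$ afterwards) and comparison with $Z_t$; your handling of the far part via $K(\rho/h_n)/K(\eps_n/h_n)\le K(M\eps_n/h_n)/K(\eps_n/h_n)$ is exactly the right fix for the point you flag as delicate.
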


\begin{proof}
Let $t\in[0,1]^k$ and choose a sequence $i(n)$, $n\in\N$, as above. Put for simplicity $s_{i(n),n}=:s_{i,n}$ and $g_{i(n),n}=:g_{i,n}$. We have
\begin{align*}
1\geq g_{i,n}(t)&=\frac{K\left(\norm{t-s_{i,n}}/h_n\right)}{E\left(\max_{j=1,\dotsc,d}K\left(\norm{t-s_{j,n}}/h_n\right)Z_{s_{j,n}}\right)}\\
&\geq \Bigg(\frac{E\left(\max_{j:\norm{s_{j,n}-t}\geq2\eps_n}K\left(\norm{t-s_{j,n}}/h_n\right)Z_{s_{j,n}}\right)}{K\left(\norm{t-s_{i,n}}/h_n\right)}\\
&\qquad+\frac{E\left(\max_{j:\norm{s_{j,n}-t}<2\eps_n}K\left(\norm{t-s_{j,n}}/h_n\right)Z_{s_{j,n}}\right)}{K\left(\norm{t-s_{i,n}}/h_n\right)}\Bigg)^{-1}\\
&=:(A_{i,n}(t)+B_{i,n}(t))^{-1}.
\end{align*}
From $t\in N(s_{i,n})$ we conclude $\norm{t-s_{i,n}}\leq\eps_n$. Hence, we have due to \eqref{eq:condition_generator} and the properties of the kernel function $K$
\[
0\leq A_{i,n}(t)\leq\frac{K(2\eps_n/h_n)}{K(\eps_n/h_n)}E\left(\sup_{t\in[0,1]^k}Z_t\right)\to_{n\to\infty}0,
\]
since $\eps_n/h_n\to_{n\to\infty}\infty$ by assumption. Furthermore, $t\in N(s_{i,n})$ and the fact that $K$ is decreasing implies
\[
\max_{j:\norm{s_{j,n}-t}<2\eps_n}K\left(\norm{t-s_{j,n}}/h_n\right)=K\left(\norm{t-s_{i,n}}/h_n\right).
\]
Thus,
\begin{align*}
1\leq B_{i,n}(t)&=\frac{1}{K\left(\norm{t-s_{i,n}}/h_n\right)}\bigg(E\bigg(\max_{j:\norm{s_{j,n}-t}<2\eps_n}K\left(\norm{t-s_{j,n}}/h_n\right)Z_{s_{j,n}}\\
&\hspace*{3cm}-\max_{j:\norm{s_{j,n}-t}<2\eps_n}K\left(\norm{t-s_{j,n}}/h_n\right)Z_{s_{i,n}}\bigg)\bigg)+1\\
&\leq\frac{E\left(\max_{j:\norm{s_{j,n}-t}<2\eps_n}K\left(\norm{t-s_{j,n}}/h_n\right)\abs{Z_{s_{j,n}}-Z_{s_{i,n}}}\right)}{K\left(\norm{t-s_{i,n}}/h_n\right)}+1\\
&\leq E\left(\max_{j:\norm{s_{j,n}-t}<2\eps_n}\abs{Z_{s_{j,n}}-Z_{s_{i,n}}}\right)+1\\
&\leq E\left(\sup_{\norm{r-s}<3\eps_n}\abs{Z_{r}-Z_{s}}\right)+1\\
&\to_{n\to\infty}1,
\end{align*}
because of Lemma \ref{lem:generator_uniformly_continuous}. Note that $\norm{s_{j,n}-t}<2\eps_n$ and $t\in N(s_{i,n})$ imply \linebreak $\norm{s_{j,n}-s_{i,n}}<3\eps_n$.
\end{proof}

We have now gathered the tools to prove convergence of the mean squared error to zero.

\begin{theorem}\label{the:mse_kernel_model}
Define $\hat{\bm\eta}_n$ and $\eps_n$ as above, $n\in\N$. Then for every $t\in[0,1]^k$
\[
\MSE\left(\hat\eta_{t,n}\right)\to_{n\to\infty}0,
\]
and
\[
\IMSE\left(\hat\eta_{t,n}\right):=\int_{[0,1]^k}\MSE\left(\hat\eta_{t,n}\right)~dt\to_{n\to\infty}0,
\]
if $\eps_n\to_{n\to\infty}0$, $h_n\to_{n\to\infty}0$, $\eps_n/h_n\to_{n\to\infty}\infty$.
\end{theorem}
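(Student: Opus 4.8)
The plan is to reduce the $\MSE$ statement to a statement about $D$-norms and then to the $L^1$-convergence of generators established in Lemma~\ref{lem:weight_functions_converge_to_one}. By Proposition~\ref{prop:mean squared error} we have
\[
\MSE\left(\hat\eta_{t,n}\right)=2\left(2-\int_0^\infty\frac{1}{\norm{(1,u)}_{D_{t,n}}^2}\,du\right),
\]
where $\norm\cdot_{D_{t,n}}$ is the $D$-norm of $(\eta_t,\hat\eta_{t,n})$, with generator $(Z_t,\hat Z_{t,n})$ and $\hat Z_{t,n}=\max_{i=1,\dots,d}\left(g_{i,n}(t)Z_{s_{i,n}}\right)$ by Lemma~\ref{lem:eta_hateta_sms}. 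By Lemma~\ref{lem:mse_inequality} it therefore suffices to show $E\left(\abs{Z_t-\hat Z_{t,n}}\right)\to_{n\to\infty}0$ for each fixed $t$, and then to upgrade this to the integrated statement.

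For the pointwise claim, fix $t\in[0,1]^k$ and pick the sequence $i(n)$ with $t\in N(s_{i(n),n})$ from Lemma~\ref{lem:weight_functions_converge_to_one}; write $s_{i,n}:=s_{i(n),n}$. I would bound
\[
\abs{Z_t-\hat Z_{t,n}}\le \abs{Z_t-Z_{s_{i,n}}}+\abs{Z_{s_{i,n}}-\hat Z_{t,n}}.
\]
The first term has expectation bounded by $E\bigl(\sup_{\norm{r-s}\le\eps_n}\abs{Z_r-Z_s}\bigr)\to 0$ by Lemma~\ref{lem:generator_uniformly_continuous}, since $\norm{t-s_{i,n}}\le\eps_n$. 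For the second term, since $g_{i,n}(t)\le 1$ and $Z_{s_{i,n}}\ge0$ we have, using $g_{i(n),n}(t)\to 1$,
\[
g_{i,n}(t)Z_{s_{i,n}}\le \hat Z_{t,n}=\max_j g_{j,n}(t)Z_{s_{j,n}},
\]
so $\hat Z_{t,n}\ge g_{i,n}(t)Z_{s_{i,n}}$, hence $\hat Z_{t,n}-Z_{s_{i,n}}\ge -(1-g_{i,n}(t))Z_{s_{i,n}}$. For an upper bound I would split the max over $j$ according to whether $\norm{s_{j,n}-t}<2\eps_n$ or not, exactly as in the proof of Lemma~\ref{lem:weight_functions_converge_to_one}: the far-away indices contribute at most $K(2\eps_n/h_n)/K(\eps_n/h_n)\cdot\sup_t Z_t\to0$ in expectation (using $\eps_n/h_n\to\infty$ and \eqref{eq:condition_generator}), while for the nearby indices $g_{j,n}(t)Z_{s_{j,n}}-Z_{s_{i,n}}\le g_{j,n}(t)\abs{Z_{s_{j,n}}-Z_{s_{i,n}}}+ (g_{j,n}(t)-1)\ldots$ is controlled by $\sup_{\norm{r-s}<3\eps_n}\abs{Z_r-Z_s}$ plus a $(1-g_{i,n}(t))\sup_t Z_t$ term, all of which vanish in expectation by Lemmas~\ref{lem:generator_uniformly_continuous} and~\ref{lem:weight_functions_converge_to_one} together with dominated convergence. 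This yields $\MSE(\hat\eta_{t,n})\to0$ pointwise.

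For the integrated statement I would invoke dominated convergence on $[0,1]^k$: the integrand $\MSE(\hat\eta_{t,n})$ tends to $0$ pointwise in $t$, and it is bounded uniformly, e.g.\ by $2(2-\int_0^\infty\norm{(1,u)}_\infty^{-2}\,du)=0$ is wrong, so instead note $\MSE(\hat\eta_{t,n})=E((\eta_t-\hat\eta_{t,n})^2)\le 2E(\eta_t^2)+2E(\hat\eta_{t,n}^2)=8$, a constant, which is integrable over the unit cube. Hence $\IMSE(\hat\eta_{t,n})=\int_{[0,1]^k}\MSE(\hat\eta_{t,n})\,dt\to0$. The one point that needs slight care is measurability of $t\mapsto\MSE(\hat\eta_{t,n})$, which follows since it is an explicit continuous function of the $D$-norm $\norm\cdot_{D_{t,n}}$, which in turn depends continuously on $t$ through the continuous weight functions $g_{i,n}$ and the continuous generator paths.

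The main obstacle is the careful handling of the second term $\abs{Z_{s_{i,n}}-\hat Z_{t,n}}$: unlike in the ordinary model (the Remark after Lemma~\ref{lem:mse_inequality}), here $g_{i,n}(s_{i,n})\ne1$, so one cannot write $Z_{s_{i,n}}$ as a max of $g$-weighted generators and must instead track both the deviation of $g_{i,n}(t)$ from $1$ and the contribution of distant grid points, reusing the two-regime splitting and the growth condition \eqref{eq:condition_on_kernel} on the kernel. Everything else is an application of the dominated convergence theorem via the integrable envelope $\sup_{t\in[0,1]^k}Z_t$.
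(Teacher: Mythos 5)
Your argument is correct, and it reaches the conclusion by a route that differs from the paper's in both halves. For the pointwise statement the paper does not bound $\abs{Z_{s_{i,n}}-\hat Z_{t,n}}$ by re-running the two-regime kernel splitting; instead it inserts the intermediate term $g_{i,n}(t)Z_{s_{i,n}}$ and uses the three-term decomposition $\abs{Z_t-\hat Z_{t,n}}\le\abs{Z_t-Z_{s_{i,n}}}+\abs{Z_{s_{i,n}}-g_{i,n}(t)Z_{s_{i,n}}}+\abs{g_{i,n}(t)Z_{s_{i,n}}-\hat Z_{t,n}}$. The point of this choice is that the last two differences have a definite sign ($Z_{s_{i,n}}\ge g_{i,n}(t)Z_{s_{i,n}}$ since $g_{i,n}\le1$, and $\hat Z_{t,n}\ge g_{i,n}(t)Z_{s_{i,n}}$ since the latter is one term of the max), so their expectations equal $1-g_{i,n}(t)$ each because $E(Z_{s_{i,n}})=E(\hat Z_{t,n})=1$; Lemma \ref{lem:weight_functions_converge_to_one} then finishes in one line, whereas you essentially re-prove that lemma in an $L^1$ form. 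Your version does work --- the far/near splitting with the bounds $K(2\eps_n/h_n)/K(\eps_n/h_n)\,E\left(\sup_{t}Z_t\right)$ and $E\left(\sup_{\norm{r-s}<3\eps_n}\abs{Z_r-Z_s}\right)$ is exactly the mechanism inside the proof of Lemma \ref{lem:weight_functions_converge_to_one} --- but the estimate for the nearby indices is written loosely and should be tightened to something like $\left(\hat Z_{t,n}-Z_{s_{i,n}}\right)^+\le\max_{j\,\text{far}}\left(g_{j,n}(t)Z_{s_{j,n}}\right)+\max_{j\,\text{near}}\abs{Z_{s_{j,n}}-Z_{s_{i,n}}}$, using only $g_{j,n}\le1$. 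For the IMSE the paper does not invoke dominated convergence: it partitions $[0,1]^k$ into the cells $N(s_{i,n})$ of \eqref{eq:set_closest_points} and bounds each of the three error terms uniformly in $t$ by the $t$-free quantities $E\left(\sup_{\norm{r-s}\le\eps_n}\abs{Z_r-Z_s}\right)$ and $1-(A_n+B_n)^{-1}$ taken from the proof of Lemma \ref{lem:weight_functions_converge_to_one}, which yields uniform control over the index set. Your DCT argument with the constant envelope $E\left(\left(\eta_t-\hat\eta_{t,n}\right)^2\right)\le 2E\left(\eta_t^2\right)+2E\left(\hat\eta_{t,n}^2\right)=8$ (valid because the kernel weights satisfy \eqref{eq:norming_functions_standardization} exactly, so $\hat{\bm\eta}_n$ is again an SMSP) is simpler and fully adequate for the stated convergence, at the cost of not exhibiting uniformity in $t$; your remark on measurability of $t\mapsto\MSE(\hat\eta_{t,n})$ is correct and is indeed the one point the DCT route must address that the paper's cellwise bound sidesteps.
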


\begin{proof}
Denote by
\[
\hat Z_{t,n}=\max_{j=1,\dotsc,d}\left(g_{j,n}( t)Z_{ s_{j,n}}\right),\qquad t\in[0,1]^k,
\]
the generator of $\hat{\bm \eta}_n$. Choose $t\in[0,1]^k$ and a sequence $i:=i(n)$, $n\in\N$, such that $t\in\bigcap_{n\in\N}N\left(s_{i,n}\right)$. We have by Lemma \ref{lem:mse_inequality}, Lemma \ref{lem:weight_functions_converge_to_one} and the continuity of $\bfZ$
\begin{align*}
\MSE\left(\hat\eta_{t,n}\right)&\leq 6E\left(\abs{Z_t-\hat Z_{t,n}}\right)\nonumber\\
&\leq 6E\left(\abs{Z_{t}-Z_{s_{i,n}}}\right)+6E\left(\abs{Z_{s_{i,n}}-g_{i,n}(t)Z_{s_{i,n}}}\right)\nonumber\\
&\quad+6E\left(\abs{g_{i,n}(t)Z_{s_{i,n}}-\hat Z_{t,n}}\right)\nonumber\\
&= 6E\left(\abs{Z_{t}-Z_{s_{i,n}}}\right)+12\left(1-g_{i,n}(t)\right)\nonumber\\
&\to_{n\to\infty}0\label{eqn:bound_for_mse};
\end{align*}
recall that $g_{i,n}(t)Z_{s_{i,n}}\le \hat Z_{t,n}$.

Next we establish convergence of the integrated mean squared error. The sets $N(s_{i,n})$, as defined in \eqref{eq:set_closest_points}, are typically not disjoint, but the intersections $N(s_{i,n})\cap N(s_{j,n})$, $i\neq j$, have Lebesgue measure zero on $[0,1]^k$. Clearly, $\bigcup_{i=1}^dN(s_{i,n})=[0,1]^k$. Therefore, applying Lemma \ref{lem:mse_inequality} yields
\begin{align*}
\IMSE\left(\hat\eta_{t,n}\right)&=\sum_{i=1}^d\int_{N(s_{i,n})}\MSE\left(\hat\eta_{t,n}\right)~dt\\
&\leq6 \sum_{i=1}^d\int_{N(s_{i,n})}E\left(\abs{Z_t-\hat Z_{t,n}}\right)~dt\\
&\leq6\bigg(\sum_{i=1}^d\int_{N(s_{i,n})}E\left(\abs{Z_t-Z_{s_i,n}}\right)~dt\\
&\qquad\qquad+\sum_{i=1}^d\int_{N(s_{i,n})}\abs{1-g_{i,n}(t)}E\left(Z_{s_i,n}\right)~dt\\
&\qquad\qquad+\sum_{i=1}^d\int_{N(s_{i,n})}E\left(\abs{g_{i,n}(t)Z_{s_i,n}-\hat Z_{t,n}}\right)~dt\bigg)\\
&=:6\left(S_{1,n}+S_{2,n}+S_{3,n}\right)
\end{align*}
due to Lemma \ref{lem:mse_inequality}. From Lemma \ref{lem:generator_uniformly_continuous} we conclude
\begin{align*}
S_{1,n}&=\sum_{i=1}^d\int_{N(s_{i,n})}E\left(\abs{Z_t-Z_{s_{i,n}}}\right)~dt\\
&\leq\sum_{i=1}^d\int_{N(s_{i,n})}E\left(\sup_{\norm{r-s}\leq\eps_n}\abs{Z_r-Z_s}\right)~dt\\
&=\int_{[0,1]^k}E\left(\sup_{\norm{r-s}\leq\eps_n}\abs{Z_r-Z_s}\right)~dt\\
&=E\left(\sup_{\norm{r-s}\leq\eps_n}\abs{Z_r-Z_s}\right)\\
&\to_{n\to\infty}0.
\end{align*}
Define
\[
A_n:=\frac{K(2\eps_n/h_n)}{K(\eps_n/h_n)}E\left(\sup_{t\in[0,1]^k}Z_t\right),\quad B_n:=E\left(\sup_{\norm{r-s}<3\eps_n}\abs{Z_{r}-Z_{s}}\right)+1.
\]
As we have seen in the proof of Lemma \ref{lem:weight_functions_converge_to_one}, we have for $t\in N(s_{i,n})$
\[
1\geq g_{i,n}(t)\geq (A_n+B_n)^{-1}\to1,
\]
and therefore
\begin{align*}
S_{2,n}&=\sum_{i=1}^d\int_{N(s_{i,n})}(1-g_{i,n}(t))~dt\\
&\leq\sum_{i=1}^d\int_{N(s_{i,n})}1-(A_n+B_n)^{-1}~dt\\
&=\int_{[0,1]^k}1-(A_n+B_n)^{-1}~dt\\
&=1-(A_n+B_n)^{-1}\\
&\to_{n\to\infty}0.
\end{align*}
Lastly, we have by the same argument as above
\begin{equation*}
S_{3,n}=\sum_{i=1}^d\int_{N(s_{i,n})}E\left(\hat Z_{t,n}-g_{i,n}(t)Z_{s_i,n}\right)~dt=S_{2,n}\to_{n\to\infty}0,
\end{equation*}
which completes the proof.
\end{proof}

\begin{rem}
\upshape Given a grid $s_1,\dots,s_{d(n)}$ with pertaining $\varepsilon_n$, the bandwidth $h_n:=\varepsilon_n^2$ would, for example, satisfy the required growth conditions entailing convergence of MSE and IMSE to zero. But, it would clearly be desirable to provide some details on how to choose the bandwidth in an optimal way, which is, for example, statistical folklore in kernel density estimation. In our setup, however, this is an open problem, which requires future work.
\end{rem}

\section{Discretized versions of copula processes}

Next we transfer the model we have established in Section \ref{sec:model} to copula processes that are in a sense close to max-stable processes. A \emph{copula process} $\bm U=(U_t)_{t\in[0,1]^k}$ is a stochastic process with continuous sample paths, such that each rv $U_t$ is uniformly distributed on the interval $[0,1]$. We say that $\bm U$ is in the \emph{functional domain of attraction} of an SMSP $\bm\eta=(\eta_t)_{t\in[0,1]^k}$, if
\begin{equation}\label{eq:fdoa}
\lim_{n\to\infty}P\left(n\left(\bm U-1\right)\leq f\right)^n=P\left(\bm\eta\leq f\right)=\exp\left(-\norm f_D\right),\qquad f\in\bar E^-\left([0,1]^k\right).
\end{equation}
Define for any $t\in[0,1]^k$ and $n\in \N$
\[
Y_t^{(n)}:=n\left(\max_{i=1,\dotsc,n}U_t^{(i)}-1\right),
\]
with $\bm U^{(1)},\bm U^{(2)},\dotsc$ being independent copies of $\bm U$. Now choose again pairwise different points $s_1,\dotsc,s_d\in[0,1]^k$ and functions $g_1,\dotsc,g_d\in\bar C^+\left([0,1]^k\right)$ with the properties \eqref{eq:norming_functions_standardization} and \eqref{eq:norming_functions_interpolation}. Condition \eqref{eq:fdoa} implies weak convergence of the finitedimensional distributions of $\bm Y^{(n)}=(Y_t^{(n)})_{t\in[0,1]^k}$, i.\,e.
\[
\left(Y_{s_1}^{(n)},\dotsc,Y_{s_d}^{(n)}\right)\to_{\mathcal D}\left(\eta_{s_1},\dotsc,\eta_{s_d}\right),
\]
where '$\to_{\mathcal D}$' denotes convergence in distribution. Just like before, we can define the \emph{discretized version} $\hat{\bm Y}^{(n)}=(\hat Y^{(n)}_t)_{t\in[0,1]^k}$ of $\bm Y^{(n)}$ with grid $\{s_1,\dotsc,s_d\}$ and weight functions $g_1,\dotsc,g_d$ to be
\[
\hat Y^{(n)}_t:=\max_{i=1,\dotsc,d}\frac{Y_{s_i}^{(n)}}{g_i(t)},\qquad t\in[0,1]^k.
\]
Elementary calculations show that \eqref{eq:fdoa} implies
\[
\lim_{n\to\infty}P\left(\hat{\bm Y}^{(n)}\leq f\right)=P\left(\hat{\bm\eta}\leq f\right),\qquad f\in \bar E^-\left([0,1]^k\right),
\]
where $\hat{\bm\eta}$ is the discretized version of $\bm\eta$ as defined in \eqref{eq:generalized_max_linear_model}. Also, it is not difficult to see that for each $t\in[0,1]^k$,
\begin{equation*}\label{eq:doa_bivariate_smsrv}
\left(Y_t^{(n)},\hat Y_t^{(n)}\right)\to_{\mathcal D}(\eta_t,\hat\eta_t)
\end{equation*}
where $(\eta_t,\hat\eta_t)$ is the standard max-stable rv from Lemma \ref{lem:eta_hateta_sms}. Now applying the continuous mapping theorem, we obtain
\begin{equation*}
\left(Y_t^{(n)}-\hat Y_t^{(n)}\right)^2\to_{\mathcal D}(\eta_t-\hat\eta_t)^2.
\end{equation*}
It remains to prove uniform integrability of the sequence on the left hand side in order to obtain the next result.

\begin{prop}
Let $t\in[0,1]^k$. Then
\[
\MSE\left(\hat Y_{t}^{(n)}\right)=E\left(\left(Y_t^{(n)}-\hat Y_t^{(n)}\right)^2\right)\to_{n\to\infty}\MSE\left(\hat\eta_t\right).
\]
\end{prop}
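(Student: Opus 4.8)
The plan is to upgrade the convergence in distribution $\left(Y_t^{(n)}-\hat Y_t^{(n)}\right)^2\to_{\mathcal D}(\eta_t-\hat\eta_t)^2$, already established in the text, to convergence of expectations, which by standard theory (see e.g. Billingsley) follows once we verify that the sequence $\left(Y_t^{(n)}-\hat Y_t^{(n)}\right)^2$, $n\in\N$, is uniformly integrable. A convenient sufficient condition is uniform boundedness of the $(1+\delta)$-th absolute moments for some $\delta>0$; equivalently here, it suffices to bound $E\left(\left(Y_t^{(n)}-\hat Y_t^{(n)}\right)^{2+\gamma}\right)$ uniformly in $n$ for some $\gamma>0$, or even just to bound the fourth moment $E\left(\left(Y_t^{(n)}-\hat Y_t^{(n)}\right)^{4}\right)$ uniformly. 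So the first step is to reduce the problem to a uniform moment bound on the two marginal sequences $Y_t^{(n)}$ and $\hat Y_t^{(n)}$ separately, using the elementary inequality $(a-b)^4\le 8(a^4+b^4)$ and the fact that $\hat Y_t^{(n)}=\max_{i}Y_{s_i}^{(n)}/g_i(t)$ is a maximum of finitely many scaled copies of the $Y_{s_i}^{(n)}$, so that its moments are controlled by $\max_i g_i(t)^{-4}\sum_i E\bigl((Y_{s_i}^{(n)})^4\bigr)$ (here one uses that $g_i(t)>0$ for $t$ fixed; if some $g_i(t)=0$ that index simply drops out of the maximum, since $\eta_{s_i}\le 0$ and the corresponding $Y_{s_i}^{(n)}\le 0$ make the ratio $-\infty$, so only indices with $g_i(t)>0$ contribute).

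The second step, which is the heart of the matter, is the uniform moment bound for a single coordinate: for fixed $s\in[0,1]^k$, show $\sup_n E\bigl((Y_s^{(n)})^4\bigr)<\infty$, where $Y_s^{(n)}=n\bigl(\max_{i\le n}U_s^{(i)}-1\bigr)$ and $U_s^{(i)}$ are i.i.d.\ uniform on $[0,1]$. This is a purely one-dimensional computation: writing $M_n:=\max_{i\le n}U_s^{(i)}$, the random variable $-Y_s^{(n)}=n(1-M_n)\ge 0$ has the explicit distribution $P\bigl(n(1-M_n)>x\bigr)=\bigl(1-x/n\bigr)^n$ for $0\le x\le n$ and $0$ otherwise, so
\[
E\bigl((Y_s^{(n)})^4\bigr)=E\bigl((n(1-M_n))^4\bigr)=\int_0^{n}4x^3\left(1-\frac{x}{n}\right)^n\,dx\le 4\int_0^{\infty}x^3 e^{-x}\,dx=24,
\]
using $\bigl(1-x/n\bigr)^n\le e^{-x}$ on $[0,n]$. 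Hence the fourth moments are bounded by the constant $24$, uniformly in $n$ and in $s$, which is exactly what is needed.

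Combining, $E\bigl((\hat Y_t^{(n)})^4\bigr)\le \max_i g_i(t)^{-4}\cdot d\cdot 24$ and $E\bigl((Y_t^{(n)})^4\bigr)\le 24$, so $\sup_n E\bigl((Y_t^{(n)}-\hat Y_t^{(n)})^4\bigr)<\infty$; in particular the squares $(Y_t^{(n)}-\hat Y_t^{(n)})^2$ are uniformly integrable, and therefore
\[
\MSE\bigl(\hat Y_t^{(n)}\bigr)=E\bigl((Y_t^{(n)}-\hat Y_t^{(n)})^2\bigr)\to E\bigl((\eta_t-\hat\eta_t)^2\bigr)=\MSE\bigl(\hat\eta_t\bigr)
\]
as $n\to\infty$, the last equality being the definition from Proposition \ref{prop:mean squared error}. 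The main obstacle, such as it is, is the bookkeeping around coordinates $i$ with $g_i(t)=0$ (handled by noting they contribute $-\infty$ to the ratio and hence never attain the maximum in $\hat Y_t^{(n)}$, since at least one $g_i(t)>0$ by \eqref{eq:norming_functions_standardization}); everything else is the explicit one-dimensional tail estimate above. Note we only used $g_i\in\bar C^+$ with \eqref{eq:norming_functions_standardization}, so the argument applies equally to the kernel-based weights $g_{i,h}$.
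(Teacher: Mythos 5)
Your proof is correct and follows essentially the same route as the paper: uniform integrability via a uniform bound on the fourth moments, reduced to the explicit one-dimensional estimate $E\bigl((Y_s^{(n)})^4\bigr)\le 24$ (you via the tail formula and $(1-x/n)^n\le e^{-x}$, the paper via the density $(1+x/n)^{n-1}$), plus the bound on $\hat Y_t^{(n)}$ through the weights $g_i(t)$. Your explicit treatment of indices with $g_i(t)=0$ is in fact slightly more careful than the paper, which simply sets $c:=\min_i g_i(t)>0$.
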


\begin{proof}
Fix $t\in[0,1]^k$. It remains to show that the sequence $X_t^{(n)}:=\left(Y_t^{(n)}-\hat Y_t^{(n)}\right)^2$ is uniformly integrable. A sufficient condition for uniform integrability is
\[
\sup_{n\in\N}E\left(\left(X_t^{(n)}\right)^2\right)<\infty,
\]
see \citet[Section 3]{billi99}. Clearly, for every $n\in\N$,
\[
E\left(\left(X_t^{(n)}\right)^2\right)\leq E\left(\left(Y_t^{(n)}\right)^4\right)+E\left(\left(\hat Y_t^{(n)}\right)^4\right).
\]
It is easy to verify that the rv $Y_t^{(n)}$ has the density $(1+x/n)^{n-1}$ on $[-n,0]$. Therefore,
\begin{align*}
E\left(\left(Y_t^{(n)}\right)^4\right)=\int_{-n}^0x^4\left(1+\frac xn\right)^{n-1}~dx=\frac{24n^5(n-1)!}{(n+4)!}\leq 24.
\end{align*}
Moreover, putting $c:=\min_{i=1,\dotsc,d}g_i(t)>0$,
\[
\abs{\hat Y_t^{(n)}}=\min_{i=1,\dotsc,d}\frac{\abs{Y_{s_i}^{(n)}}}{g_i(t)}\leq\frac{\abs{Y_{s_1}^{(n)}}}{c},
\]
and hence
\[
E\left(\left(\hat Y_t^{(n)}\right)^4\right)\leq\frac{24}{c^4},
\]
which completes the proof.
\end{proof}

\section*{Acknowledgment}
The authors are grateful to two anonymous reviewers for their careful reading of the manuscript. The paper has benefitted a lot from their critical remarks.

\end{document}